\newtheorem*{theorem}{Theorem}
\newtheorem*{example}{Example}
\newcommand{\Q}{\mathbf{Q}}
\newcommand{\Z}{\mathbf{Z}}
\renewcommand{\phi}{\varphi}
\DeclareMathOperator{\coker}{coker}
\newcommand{\GL}{\mathbf{GL}}
\DeclareMathOperator{\Sym}{Sym}
\newcommand{\Sc}{\mathbf{S}}
\renewcommand{\SS}{\mathfrak{S}}
\title{Computing inclusions of Schur modules}
\author{Steven V Sam}
\date{July 29, 2009}
\begin{document}
\maketitle

\begin{abstract}
  We describe a software package for constructing minimal free
  resolutions of $\GL_n(\Q)$-equivariant graded modules $M$ over
  $\Q[x_1, \dots, x_n]$ such that for all $i$, the $i$th syzygy module
  of $M$ is generated in a single degree. We do so by describing some
  algorithms for manipulating polynomial representations of the
  general linear group $\GL_n(\Q)$ following ideas of Olver and
  Eisenbud--Fl\o ystad--Weyman.
\end{abstract}

\section{Introduction.}

This article describes the Macaulay 2 package {\tt
  PieriMaps}\footnote{This article describes version 1.0 of {\tt
    PieriMaps} written July 3, 2009. As of the writing of this
  article, the latest version of Macaulay 2 (version 1.2) contains
  version 0.5 of {\tt PieriMaps}. The updated version of {\tt
    PieriMaps} can be downloaded at
  \url{http://math.mit.edu/~ssam/PieriMaps.m2}.}, which defines maps
of representations of the general linear group $\GL_n(\Q)$ of the form
\[
\Sc_\mu(\Q^n) \to \Sc_{(d)}(\Q^n) \otimes \Sc_\lambda(\Q^n),
\]
and presents them as degree 0 maps
\[
A \otimes \Sc_\mu(\Q^n)(-d) \to A \otimes \Sc_\lambda(\Q^n)
\]
of free modules over the polynomial ring $A = \Sym(\Q^n)$. Here
dominant weights of $\GL_n(\Q)$ are identified with weakly decreasing
sequences $\lambda$ of length $n$, and $\Sc_\lambda(\Q^n)$ denotes the
irreducible representation of highest weight $\lambda$. Such maps are
of general importance, and appeared recently in the work of Eisenbud,
Fl\o ystad and Weyman \cite{efw}. The package also describes certain
related maps in characteristic $p$.

We give some context for this work and then describe the contents of
this article.

Let $K$ be a field, and $A = K[x_1, \dots, x_n]$ the polynomial ring
in $n$ variables. In a recent paper of Eisenbud and Schreyer
\cite{eisenbudschreyer}, a theorem regarding the ``shape'' of minimal
free resolutions of Cohen--Macaulay $A$-modules was established. Given
a Cohen--Macaulay $A$-module $M$, the {\bf Betti diagram}
$\beta_{i,j}(M)$ is the number of generators of degree $j$ in the
$i$th syzygy module of a minimal free resolution of $M$. A Betti
diagram is {\bf pure} if for each $i$, $\beta_{i,j}(M) \ne 0$ for at
most one $j$. In this case, we let $(d_1, \dots, d_r)$ be the degree
sequence of $\beta$: that is, $\beta_{i,d_i}(M) \ne 0$ for all
$i$. The theorem mentioned above states that any Betti diagram of a
Cohen--Macaulay module is a rational linear combination of pure Betti
diagrams. The Herzog--K\"uhl equations \cite[Theorem 1]{herzogkuhl}
show that each strictly increasing degree sequence determines the
corresponding Betti diagram up to a rational multiple. For
$\operatorname{char} K = 0$, Eisenbud, Fl\o ystad, and Weyman
\cite{efw} constructed pure free resolutions for each degree sequence
which live in the category of $\GL_n(K)$ representations.  Eisenbud
and Schreyer give a characteristic free construction which gives
different rational multiples of the Betti diagrams in general. It is
still not completely known which multiples can and cannot come from
the Betti diagram of a graded module. For example, it is an
interesting open problem to determine for a given degree sequence the
smallest integer multiple given by the Herzog--K\"uhl equations which
actually comes from a module.

It is the goal of this article to describe how these resolutions can
be represented concretely in Macaulay~2 \cite{m2}. One can find
$\Z$-forms for these maps and work in positive characteristic, and one
such $\Z$-form is implemented, but in general it will not produce pure
resolutions. We should mention that this choice of $\Z$-form is not
unique. It would be interesting to investigate how often the
characteristic $p$ resolutions will be pure, and to construct
$\Z$-forms which give equivariant pure resolutions in positive
characteristic.

The rest of the article is organized as follows. In
Section~\ref{representations}, we review a construction for
representations of $\GL_n(\Q)$. In Section~\ref{purefreesection}, we
give the construction of Eisenbud, Fl\o ystad, and Weyman, and its
extension to characteristic $p$. In
Section~\ref{combinatorialsection}, we describe the differentials in
terms of bases, in the way that it is implemented in {\tt PieriMaps},
and illustrate an example. Finally, in Section~\ref{m2code} we give
some examples of Macaulay 2 code which show how one can use this
package. 

\section{Representations of $\GL_n(\Q)$.} \label{representations}

In this section, we present a construction for irreducible polynomial
representations of the rational algebraic group $\GL_n(\Q)$ which is
convenient for our purposes.

Let $\lambda = (\lambda_1, \dots, \lambda_n)$ ($\lambda_1 \ge \cdots
\ge \lambda_n \ge 0$) be a partition and $m = |\lambda| = \lambda_1 +
\cdots + \lambda_n$. The {\bf Young diagram} of $\lambda$ is a
pictorial representation of $\lambda$: we draw $n$ rows (some may be
empty) of boxes with $\lambda_i$ boxes in the $i$th row, making sure
that each row is left-justified. The notation $(i,j)$ refers to the
box in the $i$th row and $j$th column. A {\bf filling} of shape
$\lambda$ is an assignment of the numbers $\{1,\dots,n\}$ (repetitions
allowed) to the boxes of the Young diagram of $\lambda$. By picking
some order on the boxes of $\lambda$, we get an action of the
symmetric group $\SS_m$ on the fillings of $\lambda$. We'll say that
$\sigma \in \SS_m$ is {\bf row-preserving} if it permutes the rows of
$\lambda$ amongst themselves. The {\bf Schur
  module}\footnote{Actually, we are defining the {\bf Weyl module} of
  highest weight $\lambda$, but in characteristic 0, it is isomorphic
  to what is usually called the Schur module.}  $\Sc_\lambda(\Q^n)$ is
the rational vector space with basis given by the fillings $T$ of
$\lambda$ together with the following relations:
\begin{enumerate}
\item (Symmetric relation) $T = \sigma \cdot T$ for any row-preserving
  permutation $\sigma$.
\item (Shuffle relation) For $i$ and $j$ such that $(i,j)$ and
  $(i+1,j)$ are boxes of $\lambda$, let $B = \{(i,k) \mid j \le k \le
  \lambda_i \} \cup \{(i+1,k) \mid 1 \le k \le j\}$. Then $\sum \sigma
  \cdot T = 0$, where the sum is over all permutations which fix all
  boxes not in $B$.
\end{enumerate}
For more details, the reader is referred to \cite[Proposition
2.1.15]{weyman} where our notion of Schur module is called a Weyl
functor, and is denoted by $K_\lambda$. The above presentation
implicitly replaces the use of divided powers with symmetric powers,
but this distinction is irrelevant in characteristic 0.

A filling is a {\bf semistandard tableau} if the numbers are weakly
increasing from left to right along rows, and strictly increasing from
top to bottom along columns. A basis for $\Sc_\lambda(\Q^n)$ (over
$\Q$) is given by the semistandard tableaux of shape $\lambda$.  We
define an action of $\GL_n(\Q)$ on $\Sc_\lambda(\Q^n)$ as
follows. Given a filling $T$, let $(j_1, \dots, j_m)$ be its entries
(in some order). For $g = (g_{i,j}) \in \GL_n(\Q)$, set $g \cdot T =
\sum_I g_{i_1,j_1} \cdots g_{i_m, j_m} T_I$ where the sum is over all
index sets $I = (i_1, \dots, i_m) \in \{1,\dots,n\}^m$, and $T_I$ is
the filling obtained by replacing each $j_k$ by $i_k$.

We will need {\bf Pieri's formula}: if $(d)$ is a partition with one
row, then 
\[
\Sc_{(d)}(\Q^n) \otimes_\Q \Sc_\lambda(\Q^n) \cong \bigoplus_\mu
\Sc_\mu(\Q^n)
\]
where $\mu$ ranges over all partitions with at most $n$ parts obtained
from $\lambda$ by adding $d$ boxes, no two of which are in the same
column. Thus there are inclusions (unique up to scalar multiple)
\[
\Sc_\mu(\Q^n) \to \Sc_{(d)}(\Q^n) \otimes_\Q \Sc_\lambda(\Q^n),
\]
which we will call {\bf Pieri inclusions}. We remark that this direct
sum decomposition is only valid in characteristic 0, and is the main
barrier to extending the setup of this article to positive
characteristic.

\section{Equivariant pure free resolutions in characteristic
  0.} \label{purefreesection}

Fix a degree sequence $d = (d_0, \dots, d_n)$. Define a partition
$\alpha(d,0) = \lambda$ by $\lambda_i = d_n - d_i - n + i$, and for $1
\le j \le n$, define partitions
\[
\alpha(d,j) = (\lambda_1 + d_1 - d_0, \lambda_2 + d_2 - d_1, \dots,
\lambda_j + d_j - d_{j-1}, \lambda_{j+1}, \lambda_{j+2}, \dots,
\lambda_n).
\]
Let $A = \Q[x_1, \dots, x_n]$, and define $A$-modules ${\bf F}(d)_i$
for $0 \le i \le n$ by
\[ {\bf F}(d)_i = A(-d_i) \otimes_\Q \Sc_{\alpha(d,i)}(\Q^n)
\]
(Here $A(a)$ denotes a grading shift by $a$.) The natural action of
$\GL_n(\Q)$ on $A = \bigoplus_{i \ge 0} \Sc_{(i)}(\Q^n)$ and on
$\Sc_{\alpha(d,i)}(\Q^n)$ gives an action of $\GL_n(\Q)$ on ${\bf
  F}(d)_i$. Note that $|\alpha(d,i)| - |\alpha(d,i-1)| = d_i -
d_{i-1}$, and that $\alpha(d,i)$ is obtained from $\alpha(d,i-1)$ by
adding boxes only in the $i$th row, so there exists a Pieri inclusion
\[ 
\phi_i \colon \Sc_{\alpha(d,i)}(\Q^n) \to \Sc_{(d_i - d_{i-1})}(\Q^n)
\otimes_\Q \Sc_{\alpha(d,i-1)}(\Q^n)
\]
Identifying $\Sc_{(d_i - d_{i-1})}(\Q^n) = \Sym^{d_i - d_{i-1}}(\Q^n)$
gives a degree 0 map $\partial_i \colon {\bf F}(d)_i \to {\bf
  F}(d)_{i-1}$ given by $p(x) \otimes v \mapsto p(x)\phi_i(v)$.

\begin{theorem}[Eisenbud--Fl{\o}ystad--Weyman] \label{equivariantres}
  With the notation above,
  \[
  0 \to {\bf F}(d)_n \xrightarrow{\partial_n} \cdots
  \xrightarrow{\partial_2} {\bf F}(d)_1 \xrightarrow{\partial_1} {\bf
    F}(d)_0
  \]
  is a $\GL_n(\Q)$-equivariant minimal graded free resolution of $M(d)
  = \coker \partial_1$, which is pure of degree $d$. Furthermore,
  $M(d)$ is isomorphic, as a $\GL_n(\Q)$ representation, to the direct
  sum of all irreducible summands of $A \otimes_\Q \Sc_\lambda(\Q^n)$
  corresponding to the partitions that do not contain $\alpha(d,1)$,
  and in particular is a module of finite length.
\end{theorem}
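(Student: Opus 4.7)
The plan is to verify the easy formal properties first, then the complex condition via a combinatorial identity forced by the definition of $\alpha(d,\bullet)$, then exactness, and finally identify the cokernel and its finite length.

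\textbf{Equivariance, purity, minimality.} Each $\partial_i$ is $\GL_n(\Q)$-equivariant since the Pieri inclusion $\phi_i$ is equivariant by construction and multiplication in $A = \Sym(\Q^n)$ is equivariant. Purity of degree $d$ is immediate from ${\bf F}(d)_i = A(-d_i) \otimes_\Q \Sc_{\alpha(d,i)}(\Q^n)$. For minimality I would observe that $\phi_i$ takes values in $\Sym^{d_i-d_{i-1}}(\Q^n) \otimes_\Q \Sc_{\alpha(d,i-1)}$, and since the degree sequence is strictly increasing, $d_i - d_{i-1} > 0$, so $\partial_i$ maps ${\bf F}(d)_i$ into $\mathfrak{m} \cdot {\bf F}(d)_{i-1}$ for $\mathfrak{m} = (x_1, \dots, x_n)$.

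\textbf{Complex property.} I would argue that $\partial_{i-1} \circ \partial_i$ induces a $\GL_n(\Q)$-equivariant map $\Sc_{\alpha(d,i)}(\Q^n) \to \Sym^{d_i-d_{i-2}}(\Q^n) \otimes_\Q \Sc_{\alpha(d,i-2)}(\Q^n)$, obtained by composing the two Pieri inclusions and then applying the multiplication $\Sym^a \otimes \Sym^b \to \Sym^{a+b}$. By Schur's lemma this vanishes unless $\Sc_{\alpha(d,i)}(\Q^n)$ is a summand of the target, which by Pieri's rule requires $\alpha(d,i)/\alpha(d,i-2)$ to be a horizontal strip. The skew shape $\alpha(d,i)/\alpha(d,i-2)$ consists of boxes in row $i-1$ in columns $\lambda_{i-1}+1, \dots, \lambda_{i-1}+d_{i-1}-d_{i-2}$ and boxes in row $i$ in columns $\lambda_i+1, \dots, \lambda_i+d_i-d_{i-1}$. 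Using $\lambda_k = d_n - d_k - n + k$, one computes $\lambda_i + d_i - d_{i-1} = \lambda_{i-1}+1$, so the last box added in row $i$ and the first box added in row $i-1$ lie in the same column. This violates the horizontal-strip condition, so $\Sc_{\alpha(d,i)}(\Q^n)$ does not appear, and $\partial_{i-1}\partial_i = 0$.

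\textbf{Exactness.} My plan is to invoke the Buchsbaum--Eisenbud acyclicity criterion: since each ${\bf F}(d)_i$ is free over $A$, it suffices to check that the ranks of the $\partial_i$ obey the expected additivity and that the Fitting ideals have depth at least $i$. Equivariance and Pieri decomposition pin down the ranks in the $\GL_n(\Q)$-Grothendieck ring, and depth can be controlled by matching $\sum_i (-1)^i \mathrm{ch}({\bf F}(d)_i)$ with the character of a Cohen--Macaulay module with the prescribed pure Betti diagram dictated by the Herzog--K\"uhl equations. This is the main obstacle in the argument: a cleaner conceptual route realizes $({\bf F}(d)_\bullet, \partial_\bullet)$ as the pushforward of a Koszul-type complex on a partial flag variety via Weyman's geometric technique (which is what Eisenbud--Fl{\o}ystad--Weyman actually do), but an explicit representation-theoretic verification using weight space dimensions is also in principle possible.

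\textbf{Cokernel and finite length.} Once exactness is established, $[M(d)] = \sum_i (-1)^i [{\bf F}(d)_i]$ in the $\GL_n(\Q)$-equivariant Grothendieck ring. Expanding each $[A \otimes_\Q \Sc_{\alpha(d,i)}(\Q^n)]$ via iterated Pieri and telescoping the alternating sum, I expect the multiplicities to cancel to leave exactly $\sum_{\mu} [\Sc_\mu(\Q^n)]$ summed over partitions $\mu$ with $\mu/\lambda$ a horizontal strip and $\mu \not\supset \alpha(d,1)$. Since $M(d)$ is an equivariant quotient of $A \otimes_\Q \Sc_\lambda(\Q^n) = \bigoplus_\mu \Sc_\mu(\Q^n)$, the character determines the isomorphism class, giving the claimed description. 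Finite length follows from the bounds $\mu_1 \le \lambda_1 + d_1 - d_0 - 1$ (from $\mu \not\supset \alpha(d,1)$) and $\mu_i \le \lambda_{i-1}$ for $i \ge 2$ (the horizontal-strip constraint), which together bound every part of $\mu$, so only finitely many summands contribute and $M(d)$ is finite-dimensional over $\Q$.
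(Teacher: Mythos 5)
The steps you actually carry out are correct: equivariance and purity are immediate, minimality follows from $d_{i-1}<d_i$, and your proof that $\partial_{i-1}\partial_i=0$ --- restricting to the generators, invoking Schur's lemma, and computing $\lambda_i+d_i-d_{i-1}=\lambda_{i-1}+1$ so that $\alpha(d,i)/\alpha(d,i-2)$ has two boxes in column $\lambda_{i-1}+1$ and hence $\Sc_{\alpha(d,i)}(\Q^n)$ does not occur in $\Sym^{d_i-d_{i-2}}(\Q^n)\otimes_\Q\Sc_{\alpha(d,i-2)}(\Q^n)$ --- is the standard argument. Keep in mind, though, that the paper itself gives no proof: its ``proof'' is the citation to \cite[Theorem 3.2]{efw}, so the real benchmark is the argument there.

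The genuine gap is exactness, which you acknowledge but do not close, and the route you sketch would fail as stated. The Buchsbaum--Eisenbud criterion needs, beyond rank additivity, lower bounds on the depth of the ideals of maximal minors of each $\partial_i$, and depth is simply not visible in the equivariant Grothendieck ring; moreover, ``matching $\sum_i(-1)^i\mathrm{ch}({\bf F}(d)_i)$ with the character of a Cohen--Macaulay module with the prescribed pure Betti diagram'' is circular, since the existence of such a module is precisely what the theorem asserts (Herzog--K\"uhl only constrains what its Betti numbers would have to be). A non-circular strategy exploits equivariance on the homology itself: for $i\ge 1$, $H_i$ is a finitely generated $\GL_n(\Q)$-equivariant graded $A$-module, so its support is a $\GL_n(\Q)$-stable cone in $\Q^n$, hence either $\{0\}$ or everything; once exactness is checked at a single nonzero point (equivalently, generically --- this still requires its own argument, but it is a finite-dimensional representation-theoretic problem, not a depth computation), each $H_i$ has finite length and the Peskine--Szpiro acyclicity lemma for a length-$n$ complex of free modules over the $n$-dimensional ring $A$ forces $H_i=0$ for $i\ge 1$. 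Your parenthetical that EFW ``actually'' prove this by pushing forward a Koszul-type complex from a partial flag variety conflates their two constructions: the geometric pushforward is their separate characteristic-free construction, not the proof of the equivariant Theorem 3.2. Finally, in the cokernel identification, ``I expect the multiplicities to cancel'' is exactly the statement that for every horizontal strip $\mu/\lambda$ the alternating sum over $i$ of the number of $i$ with $\mu/\alpha(d,i)$ a horizontal strip, taken with signs $(-1)^i$, equals $1$ when $\mu\not\supseteq\alpha(d,1)$ and $0$ otherwise; this finite combinatorial verification still has to be done. Once it is, your finite-length bounds $\mu_1\le\lambda_1+d_1-d_0-1$ and $\mu_i\le\lambda_{i-1}$ for $i\ge 2$ correctly finish that part.
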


\begin{proof} See \cite[Theorem 3.2]{efw}. We should emphasize that
  our notation for partitions differs from that of (loc. cit.) in that
  the notions of rows and columns are interchanged. \end{proof}

An implementation of the map $\partial_1$ is given in the method {\tt
  pureFree} in {\tt PieriMaps}. One can then compute the remaining
maps and compose them, or compute a minimal free resolution using
Macaulay 2.

The extension of this construction to characteristic $p$ in general
does not produce pure resolutions. The idea is to clear the
denominators in the matrix giving a Pieri inclusion, remove the
torsion from the cokernel, and then reduce coefficients modulo $p$,
for a given prime. This is also implemented in the method {\tt
  pureFree}: the user need only specify a characteristic.

\section{Combinatorial description of the Pieri
  inclusion.} \label{combinatorialsection}

We wish to describe how the Pieri inclusions work in terms of the
bases of semistandard tableaux of the Schur modules
$\Sc_\lambda(\Q^n)$. The Pieri inclusion which induces the map
$\partial_i \colon {\bf F}(d)_i \to {\bf F}(d)_{i-1}$ is given by
\[
\Sc_{\alpha(d,i)}(\Q^n) \to \Sc_{(d_i - d_{i-1})}(\Q^n) \otimes_\Q
\Sc_{\alpha(d,i-1)}(\Q^n),
\]
and $\alpha(d,i-1)$ is obtained from $\alpha(d,i)$ by removing $d_i -
d_{i-1}$ boxes from the $i$th row. So to describe this map, we first
describe the case $d_i - d_{i-1} = 1$. In this case, the map was
described by Olver in \cite[\S 6]{olver}.  For the general case, one
can iterate this process of removing one box at a time and compose the
maps. It needs to be proved (though it is not hard), that such a
composition is the desired map, and in fact, if one removes boxes from
multiple rows, the order in which the boxes are removed is irrelevant
(up to nonzero scalar multiple).

Suppose we have a partition $\lambda$ with $\lambda_{k-1} >
\lambda_k$. Let $\mu$ be the partition resulting from adding a box to
the $k$th row of $\lambda$. Set $B_k = \{ (j_1, \dots, j_p) \mid 0 =
j_1 < \cdots < j_p = k\}$, and for $J = (j_1, \dots, j_p) \in B_k$,
define $\#J = p$. Given $x^a \otimes T$ where $x^a = x_1^{a_1} \cdots
x_n^{a_n} \in \Q[x_1, \dots, x_n]$ is a monomial and $T$ is a filling
of $\mu$, we first interpret the $x^a$ as a ``zeroth'' row of $T$
consisting of $a_1 + \cdots + a_n$ boxes filled with $a_i$
$i$'s. We'll call this a {\bf shape}. Given numbers $0 \le i < j \le
k$, define $\tau_{i,j}(x^a \otimes T)$ to be the sum of all shapes
obtained from $x^a \otimes T$ by removing a box along with its entry
(and then the boxes to the right of it get shifted one to the left)
from row $j$ and moving it to the end of row $i$. Then set
\begin{align} \label{denominators}
\tau_J = \tau_{j_{p-1},j_p} \circ \cdots \circ \tau_{j_2, j_3} \circ
\tau_{j_1, j_2}, \quad c_J = \prod_{i=2}^{p-1} (\mu_{j_i} - \mu_k + k
- j_i).
\end{align}
The desired map $\Sc_\mu(\Q^n) \to \Sc_{(1)}(\Q^n) \otimes_\Q
\Sc_\lambda(\Q^n)$ is now the alternating sum $\displaystyle \sum_{J
  \in B_k} \frac{(-1)^{\#J} \tau_J}{c_J}$. We give an example to
illustrate all of the above.

\begin{example} \label{combinpieri} \rm Let $\mu = (2,1,1)$, $k=3$,
  $n=3$, and consider the element $1 \otimes T$ where $T$ is the
  semistandard tableau
\[
\Yvcentermath1 T = \young(12,2,3).
\]
We think of $T$ as having a ``row 0'' which is an empty row on top of
$T$. If $J = (0,1,3)$, then 
\[
\Yvcentermath1 \tau_J(T) = \tau_{1,3}(\tau_{0,1}(T)) = 
\tau_{1,3}\left(\ \young(1,2,2,3) + \young(2,1,2,3)\ \right)
= \young(1,23,2) + \young(2,13,2)
\]
which we really think of as
\[
\Yvcentermath1 x_1 \otimes \young(23,2) +
x_2 \otimes \young(13,2) =
-\frac{1}{2} x_1 \otimes \young(22,3) +
x_2 \otimes \young(13,2)
\]
in the module $\Q[x_1, x_2, x_3] \otimes_\Q \Sc_{(2,1)} V$. The
equality follows from the relations described in
Section~\ref{representations}. In this case, $c_J = 2$.
\end{example}

\section{An example of using {\tt PieriMaps}.} \label{m2code}

We illustrate some of the main uses of {\tt PieriMaps}. First, we load
the package and define a polynomial ring in 3 variables $A =
\Q[a,b,c]$:
\begin{verbatim}
i1 : loadPackage "PieriMaps"
i2 : A = QQ[a,b,c];
\end{verbatim}
Now we compute a module whose pure free resolution has degree sequence
$\{0,1,3,5\}$.
\begin{verbatim}
i3 : pureFree({0,1,3,5}, A)

o3 = | 3a 0  b  0  c  0  0  0  0     0 0   0  0  0  0  |
     | 0  3a 0  b  0  c  0  0  0     0 0   0  0  0  0  |
     | 0  0  2a 0  0  0  2b 0  c     0 0   0  0  0  0  |
     | 0  0  0  2a 0  0  0  2b 0     c 0   0  0  0  0  |
     | 0  0  0  0  2a 0  0  0  b     0 2c  0  0  0  0  |
     | 0  0  0  0  0  2a 0  0  0     b 0   2c 0  0  0  |
     | 0  0  0  0  0  0  0  a  -1/2a 0 0   0  3b c  0  |
     | 0  0  0  0  0  0  0  0  0     a -2a 0  0  2b 2c |

             8       15
o3 : Matrix A  <--- A
\end{verbatim}
This is the matrix of free $A$-modules induced by the Pieri inclusion
$\Sc_{3,1}(\Q^3) \to \Sc_1(\Q^3) \otimes \Sc_{2,1}(\Q^3)$. The bases
of $A^{15}$ and $A^8$ can be listed with the commands {\tt
  standardTableaux(3, \{3,1\})} and {\tt standardTableaux(3,
  \{2,1\})}, respectively. For example, the first command has {\tt
  \{\{0,0,0\}, \{1\}\}} as its first basis element, which is meant to
represent the semistandard tableau $\Yvcentermath1
\young(000,1)$. Alternatively, this map can be produced with the
command {\tt pieri(\{3,1,0\}, \{1\}, A)} because the partition
$(2,1,0)$ is obtained by subtracting 1 from the first entry of
$(3,1,0)$. The module we are after is the cokernel of this map.
\begin{verbatim}
i4 : res coker oo

      8      15      10      3
o4 = A  <-- A   <-- A   <-- A  <-- 0
                                    
     0      1       2       3      4

o4 : ChainComplex
\end{verbatim}
We can check that this resolution is pure by looking at its Betti
table:
\begin{verbatim}
i5 : betti oo

            0  1  2 3
o5 = total: 8 15 10 3
         0: 8 15  . .
         1: .  . 10 .
         2: .  .  . 3

o5 : BettiTally
\end{verbatim}
We can lift the above map to a $\Z$-form and reduce the coefficients
modulo 2:
\begin{verbatim}
i6 : pieri({3,1},{1},ZZ/2[x,y,z])

o6 = | x 0 y 0 0 z 0 0 0 0 0 0 0 0 0 |
     | 0 x 0 0 y 0 0 z 0 0 0 0 0 0 0 |
     | 0 0 0 y 0 0 z 0 0 0 0 0 0 0 0 |
     | 0 0 0 0 0 0 y 0 z 0 0 0 z 0 0 |
     | 0 0 0 0 0 0 y 0 z 0 0 0 0 0 0 |
     | 0 0 0 0 0 0 0 0 y z 0 0 y 0 0 |
     | 0 0 0 0 0 0 0 0 0 0 x y 0 z 0 |
     | 0 0 0 0 0 0 0 0 0 0 0 0 x 0 z |

             ZZ          8       ZZ          15
o6 : Matrix (--[x, y, z])  <--- (--[x, y, z])
              2                   2
\end{verbatim}
However, the resolution of its cokernel is not pure:
\begin{verbatim}
i7 : betti res coker oo

            0  1  2 3
o7 = total: 8 15 10 3
         0: 8 15  9 3
         1: .  .  . .
         2: .  .  1 .

o7 : BettiTally
\end{verbatim}
Now let's look at an example of changing the order of composition of
Pieri inclusions. We know that there is a nonzero inclusion of the
form $\Sc_{2,1}(\Q^3) \to \Sc_2(\Q^3) \otimes \Sc_1(\Q^3)$. There are
two different ways to get this map with the function {\tt pieri}. We
could remove a box from the second row of $(2,1,0)$ and then remove a
box from the first row of $(2,0,0)$ to get the composition
\[
\Sc_{2,1}(\Q^3) \to \Sc_1(\Q^3) \otimes \Sc_2(\Q^3) \xrightarrow{1
  \otimes \phi} \Sc_1(\Q^3) \otimes \Sc_1(\Q^3) \otimes \Sc_1(\Q^3)
\xrightarrow{p \otimes 1} \Sc_2(\Q^3) \otimes \Sc_1(\Q^3),
\]
where $\phi$ is a Pieri inclusion and $p$ is the quotient map:
\begin{verbatim}
i8 : pieri({2,1}, {2,1}, A)

o8 = | ab  ac  1/2b2  1/2bc 1/2bc 1/2c2  0   0      |
     | -a2 0   -1/2ab 1/2ac -ac   0      bc  1/2c2  |
     | 0   -a2 0      -ab   1/2ab -1/2ac -b2 -1/2bc |

             3       8
o8 : Matrix A  <--- A
\end{verbatim}
Or, we could remove a box from the first row of $(2,1,0)$ and then
remove a box from the second row of $(1,1,0)$ to get the composition 
\[
\Sc_{2,1}(\Q^3) \to \Sc_1(\Q^3) \otimes \Sc_{1,1}(\Q^3) \xrightarrow{1
  \otimes \phi} \Sc_1(\Q^3) \otimes \Sc_1(\Q^3) \otimes \Sc_1(\Q^3)
\xrightarrow{p \otimes 1} \Sc_2(\Q^3) \otimes \Sc_1(\Q^3),
\]
which is written as
\begin{verbatim}
i9 : pieri({2,1}, {1,2}, A)

o9 = | 2ab  2ac  b2  bc   bc   c2  0    0   |
     | -2a2 0    -ab ac   -2ac 0   2bc  c2  |
     | 0    -2a2 0   -2ab ab   -ac -2b2 -bc |

             3       8
o9 : Matrix A  <--- A
\end{verbatim}
Here, we see that the matrices differ by a scalar multiple of 2. In
general, different orders of box removals will yield the same matrix
up to nonzero scalar multiple. The differences arise from the
denominators $c_J$ (see \eqref{denominators}).

\paragraph{Acknowledgements.} The author thanks David Eisenbud and
Jerzy Weyman for helpful comments and encouragement while the package
{\tt PieriMaps} was written, and for reading a draft of this
article. The author also thanks an anonymous referee for suggesting
some improvements.

\bigskip

\noindent Steven V Sam\\
Department of Mathematics\\
Massachusetts Institute of Technology\\
Cambridge, MA 02139\\
{\tt ssam@math.mit.edu}\\
\url{http://math.mit.edu/~ssam/}

\end{document}